\documentclass[11pt]{article}

\usepackage[utf8]{inputenc}
\usepackage[T1]{fontenc}
\usepackage{amsmath,amssymb,amsthm}
\usepackage[margin=1in]{geometry}
\usepackage{enumitem}
\usepackage{tikz}
\usetikzlibrary{calc}
\usepackage[colorlinks=true,linkcolor=blue,citecolor=blue,urlcolor=blue]{hyperref}
\usepackage{authblk}

\theoremstyle{plain}
\newtheorem{theorem}{Theorem}[section]
\newtheorem{lemma}[theorem]{Lemma}
\newtheorem{proposition}[theorem]{Proposition}
\newtheorem{corollary}[theorem]{Corollary}
\theoremstyle{definition}
\newtheorem{definition}[theorem]{Definition}
\newtheorem{example}[theorem]{Example}
\theoremstyle{remark}

\newcommand{\Sub}{\mathcal{S}}
\newcommand{\SubL}{\mathcal{S}_L}
\newcommand{\dT}{d_T}
\newcommand{\cost}{\operatorname{cost}}
\newcommand{\sd}{\mathbin{\triangle}}
\newcommand{\interior}[1]{#1^{\circ}}

\begin{document}

\title{A Crawling Metric on the Space of Subtrees of a Tree}

\author[1*]{Thu-Le Tran}
\author[2,3,4]{Cao Minh Quang}
\author[2,5]{Nguyen Chi Tam}
\author[1]{Kien Trung Nguyen}
\affil[1]{Faculty of Mathematics and Informatics, Teacher College, Can Tho University, Can Tho, Viet Nam}
\affil[2]{Faculty of Mathematics and Computer Science, University of Science, Ho Chi Minh City, Viet Nam}
\affil[3]{Vietnam National University, Ho Chi Minh City, Viet Nam}
\affil[4]{Nguyen Binh Khiem High School for the Gifted, Vinh Long, Viet Nam}
\affil[5]{Faculty of Information Technology, Cantho University of Technology, Can Tho, Viet Nam}
\affil[ ]{\textit{Emails:} \texttt{ttle@ctu.edu.vn} (T.-L.~Tran),
  \texttt{kt13quang@yahoo.com} (C.~M.~Quang),
  \texttt{nctam@ctuet.edu.vn} (N.~C.~Tam),
  \texttt{trungkien@ctu.edu.vn} (K.~T.~Nguyen)}
\affil[*]{Corresponding author}
\date{\today}

\maketitle

\begin{abstract}
We propose a crawling metric on the space of subtrees of a tree. The symmetric difference cannot tell whether two disjoint subtrees are close or far; we halve it and add the gap between them. The result is a metric, one half is the smallest factor, and the tree embeds canonically. The metric equals the least cost of crawling between subtrees by growing and shrinking without jumping. With simultaneous growth and shrinkage, subtrees of fixed length form a geodesic space.
\end{abstract}

\noindent\textbf{Keywords:} tree; subtree; metric space; geodesic space.

\tableofcontents

\section{Introduction}\label{sec:intro}

Let $T$ be a tree whose edges carry positive lengths. In a \emph{subtree
location problem} the decision variable is not a point but a subtree $X$ of
$T$, typically of bounded total length, and the goal is to place $X$ well
with respect to weighted vertices $v_1,\dots,v_n$. The two classical
objectives are the \emph{median}, $\sum_j w_j\,\dT(v_j,X)$, and the
\emph{center}, $\max_j w_j\,\dT(v_j,X)$, together with their combinations
and generalizations \cite{HakimiSchmeichelLabbe1993,ShiouraUno1997,
TamirPuertoPerezBrito2002,PuertoTamir2005}, including robust versions with
uncertain vertex weights \cite{RobustMedianSubtree} and inverse versions in
which the tree is modified so that a given subtree becomes optimal
\cite{InverseCenterSubtree}. Several of these problems are solved by
greedy or local rules, which behave as if the objective were convex. Since
the variable ranges over a family of sets rather than a convex region of
$\mathbb{R}^n$, explaining this behavior first requires a geometry on the
space of subtrees: a distance, and a notion of how one subtree moves to
another. This paper studies the distance between subtrees.

When all subtrees contain a common point, a natural choice is the length of
the symmetric difference, $\mu(X\sd Y)$. A subtree is then moved by
shrinking the part not shared and growing the missing part, and the cost of
this move is exactly $\mu(X\sd Y)$. Without a common point this distance
fails. Two disjoint subtrees of length $L$ are always at distance $2L$,
whether they touch at their ends or sit at opposite sides of $T$. The
symmetric difference sees which points are covered, but not how far a
subtree must travel.

This note proposes a distance that records travel. For subtrees $X,Y$ put
\[
  d(X,Y)=\tfrac12\,\mu(X\sd Y)+\dT(X,Y),
\]
where $\dT(X,Y)$ is the length of the shortest arc joining $X$ and $Y$, and
is $0$ when they meet. The first term pays for what is removed and added;
the second pays for the way between them.

This distance generally differs from the Hausdorff distance.
Up to a factor of $\tfrac12$, it has the algebraic form of
pseudometrics induced by isotone valuations on
join-semilattices~\cite{ramana1985valuation}.
Indeed, writing $X\vee Y$ for the smallest subtree containing
$X\cup Y$, we have
\[
  d(X,Y)=\mu(X\vee Y)-\tfrac12\bigl(\mu(X)+\mu(Y)\bigr).
\]
In the present setting, we prove that $d$ is a metric on the
space of all subtrees.

The contributions are as
follows.
\begin{enumerate}[label=(\roman*)]
  \item $d$ is a metric on the space of all subtrees. The factor
    $\tfrac12$ is the smallest coefficient of $\mu(X\sd Y)$ for which this
    holds, and the tree $(T,\dT)$ embeds isometrically in this space as its
    single points (Section~\ref{sec:metric}).
  \item $d$ is the least cost of a \emph{crawl}, a sequence of overlapping
    subtrees leading from $X$ to $Y$. Two crawls attain this cost: one
    inflates to the join $X\vee Y$ and then
    deflates; the other, when $X$ and $Y$ meet, deflates to $X\cap Y$ and
    then inflates (Section~\ref{sec:crawl}).
  \item On the subtrees of a fixed length, growth and shrinkage can be
    performed simultaneously, which yields a geodesic between any two such
    subtrees that never changes length. Hence this family is a geodesic
    space (Section~\ref{sec:geodesic}).
\end{enumerate}

The paper is organized as follows. Section~\ref{sec:metric} defines $d$ and
proves that it is a metric. Section~\ref{sec:crawl} interprets $d$ as a
least crawling cost. Section~\ref{sec:geodesic} constructs geodesics among
subtrees of a fixed length. Section~\ref{sec:conclusion} concludes.

\section{A metric on the space of subtrees}\label{sec:metric}

Throughout, $T$ is a finite tree whose edges are segments of positive
length, viewed as a metric space $(T,\dT)$ in which $\dT(x,y)$ is the length
of the unique arc $[x,y]$. We assume $T$ has at least one edge. The length
measure is denoted $\mu$. A \emph{subtree} is a nonempty, closed, connected
subset of $T$; single points are subtrees of length $0$. We write $\Sub$ for
the set of subtrees. For $X,Y\in\Sub$ we put
$\dT(X,Y)=\min\{\dT(x,y):x\in X,\ y\in Y\}$; the minimum exists by
compactness, and $\dT(X,Y)=0$ if and only if $X\cap Y\neq\emptyset$.

A subtree may cover some edges completely and others only in part, and its
length adds up these pieces. Figure~\ref{fig:length} gives an example.

\begin{figure}[h]
\centering
\begin{tikzpicture}[scale=1.1,
  vtx/.style={circle,fill=black,inner sep=1.4pt},
  every label/.style={font=\small}]
  \node[vtx,label=above:$v_0$] (v0) at (0,0) {};
  \node[vtx,label=above:$v_1$] (v1) at (-3,0) {};
  \node[vtx,label=above:$v_2$] (v2) at (2,0) {};
  \node[vtx,label=right:$v_3$] (v3) at (4,1) {};
  \node[vtx,label=right:$v_4$] (v4) at (3,-1.2) {};
  \draw[gray!60,line width=1pt] (v0)--(v1) node[pos=0.25,below,black]{$3$};
  \draw[gray!60,line width=1pt] (v0)--(v2) node[midway,below,black]{$2$};
  \draw[gray!60,line width=1pt] (v2)--(v3) node[pos=0.75,above left,black]{$4$};
  \draw[gray!60,line width=1pt] (v2)--(v4) node[midway,below left,black]{$1.5$};
  \coordinate (a) at ($(v0)!0.5!(v1)$);
  \coordinate (b) at ($(v2)!0.5!(v3)$);
  \draw[blue,line width=3pt,opacity=0.6] (a)--(v0)--(v2)--(b);
  \draw[blue,line width=3pt,opacity=0.6] (v2)--(v4);
  \node[circle,draw=blue,fill=white,inner sep=1.4pt] at (a) {};
  \node[circle,draw=blue,fill=white,inner sep=1.4pt] at (b) {};
\end{tikzpicture}
\caption{A tree with edge lengths $3,2,4,1.5$ and a subtree $X$ (thick).
$X$ covers half of $[v_0,v_1]$, all of $[v_0,v_2]$, half of $[v_2,v_3]$
and all of $[v_2,v_4]$, so $\mu(X)=1.5+2+2+1.5=7$.}
\label{fig:length}
\end{figure}

When two subtrees are disjoint, they are joined by a unique shortest arc.

\begin{lemma}[Bridge]\label{lem:bridge}
Let $X,Y\in\Sub$ with $X\cap Y=\emptyset$. There are unique points $p\in X$, $q\in Y$
such that the arc $P=[p,q]$ satisfies $P\cap X=\{p\}$ and $P\cap Y=\{q\}$.
Every arc from a point of $X$ to a point of $Y$ contains $P$, and
$\dT(X,Y)=\mu(P)>0$.
\end{lemma}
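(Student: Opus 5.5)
The plan is to construct $p$ and $q$ from a minimizing pair, prove the "every arc contains $P$" property, and derive uniqueness and the distance formula from it.

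\emph{Construction.} By compactness, choose $x_0\in X$ and $y_0\in Y$ with $\dT(x_0,y_0)=\dT(X,Y)$. This number is positive because $X\cap Y=\emptyset$. Put $p=x_0$, $q=y_0$ and $P=[p,q]$. If $P$ met $X$ at some point $p'\neq p$, then $p'$ would lie strictly between $p$ and $q$. Hence $\dT(p',q)<\dT(p,q)$, contradicting minimality. So $P\cap X=\{p\}$, and symmetrically $P\cap Y=\{q\}$.

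\emph{Every arc contains $P$.} Let $x\in X$ and $y\in Y$ be arbitrary. Since $X$ is a closed connected subset of a tree, it is uniquely arcwise connected, so $[x,p]\subseteq X$; similarly $[q,y]\subseteq Y$. Consider the concatenation $[x,p]\cup[p,q]\cup[q,y]$. Because $X\cap Y=\emptyset$, $[x,p]$ meets $[p,q]$ only at $p$ and $[p,q]$ meets $[q,y]$ only at $q$. Also $[x,p]\cap[q,y]\subseteq X\cap Y=\emptyset$. So the concatenation is an arc from $x$ to $y$. By uniqueness of arcs in a tree (a real tree), it equals $[x,y]$, and hence $P\subseteq[x,y]$.

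This is the step I expect to need the most care. One must check that the concatenation of three arcs, pairwise meeting only at the junction points, is again an arc, and then invoke uniqueness of arcs in $T$. Both facts are standard for finite metric trees, and I would state them explicitly rather than prove them.

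\emph{Consequences.} For any $x\in X$ and $y\in Y$ we get $\dT(x,y)=\mu([x,y])\ge\mu(P)$. Together with the choice of $p,q$, this gives $\dT(X,Y)=\mu(P)=\dT(p,q)>0$.

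For uniqueness, suppose $p'\in X$ and $q'\in Y$ also satisfy $[p',q']\cap X=\{p'\}$ and $[p',q']\cap Y=\{q'\}$. By the previous step $P\subseteq[p',q']$. Then $p\in[p',q']\cap X=\{p'\}$, so $p=p'$, and similarly $q=q'$.
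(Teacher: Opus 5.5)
Your proof is correct, but it takes a different route from the paper's.

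The paper fixes one pair $x\in X$, $y\in Y$. It takes $q$ to be the first point of $[x,y]$ lying in $Y$, and $p$ the last point of $[x,q]$ lying in $X$. It then argues by separation: removing any $b\in\interior{P}$ disconnects $x$ from $y$. Since $X$ and $Y$ are connected and avoid $b$, this also disconnects $X$ from $Y$, so every arc from $X$ to $Y$ passes through every point of $\interior{P}$ and therefore contains $P$.

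You instead choose $p,q$ as a nearest pair and obtain $P\cap X=\{p\}$ and $P\cap Y=\{q\}$ from minimality. You then prove containment by showing that $[x,p]\cup P\cup[q,y]$ is an arc, which must equal $[x,y]$ by uniqueness of arcs. A small point of attribution: $[x,p]$ meets $P$ only at $p$ because $[x,p]\subseteq X$ and $P\cap X=\{p\}$, not because of $X\cap Y=\emptyset$. You already proved that fact in your first step, so nothing is missing.

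What each approach buys:
\begin{itemize}
\item Your construction makes $\dT(X,Y)=\mu(P)$ immediate. You also spell out the uniqueness of $p$ and $q$, which the paper leaves implicit; it follows exactly as in your last step.
\item Your route relies on compactness, on the convexity of subtrees ($[x,p]\subseteq X$), and on uniqueness of arcs.
\item The paper's cut-point argument needs only that a connected set avoiding $b$ lies in one component of $T\setminus\{b\}$.
\item The paper's separation mechanism is reused directly in the proof of Lemma~\ref{lem:cover}, so its proof here prepares that later step.
\end{itemize}
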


\begin{proof}
Fix $x\in X$, $y\in Y$. Let $q$ be the first point of $[x,y]$, starting
from $x$, that lies in $Y$, and let $p$ be the last point of $[x,q]$ lying
in $X$. Then $p\neq q$ and the open arc $\interior{P}=(p,q)$ misses
$X\cup Y$. Removing a point $b\in\interior{P}$ separates $x$ from $y$, and
since $X$ and $Y$ are connected and avoid $b$, it separates $X$ from $Y$.
Hence every arc from $X$ to $Y$ passes through $b$, and therefore contains
$\interior{P}$ and its closure $P$. Such an arc has length at least
$\mu(P)$, with equality only for $P$ itself.
\end{proof}

We call $P$ the \emph{bridge} between $X$ and $Y$ and write
$\delta=\mu(P)=\dT(X,Y)$. When $X\cap Y\ne\emptyset$ we set $P=\emptyset$
and $\delta=0$. The smallest subtree containing $X\cup Y$ is
\[
  X\vee Y=X\cup P\cup Y,
\]
which equals $X\cup Y$ when $X\cap Y\ne\emptyset$; it is a subtree since
$P$ meets $X$ at $p$ and $Y$ at $q$. Since $P$ meets $X\cup Y$ only at its
endpoints,
\begin{equation}\label{eq:join-length}
  \mu(X\vee Y)=\mu(X\cup Y)+\dT(X,Y).
\end{equation}

\begin{definition}[Crawling metric]\label{def:d}
For $X,Y\in\Sub$,
\[
  d(X,Y)=\tfrac12\,\mu(X\sd Y)+\dT(X,Y).
\]
\end{definition}

\begin{example}\label{ex:basic}
On a segment, let $X=[0,L]$ and $Y=[L+\delta,2L+\delta]$. Then
$d(X,Y)=L+\delta$, which is the distance by which $X$ must be translated to
reach $Y$. For $X=[0,L]$ and $Y=[s,L+s]$ with $0\le s\le L$ one gets
$d(X,Y)=s$, again the translation length.
\end{example}

The only nontrivial axiom is the triangle inequality when the outer two
subtrees are disjoint. It rests on the following covering property of the
bridge.

\begin{lemma}[Bridge covering]\label{lem:cover}
Let $X,Y,Z\in\Sub$ with $X\cap Z=\emptyset$, and let $P$ be the bridge
between $X$ and $Z$. Let $P_{XY}$ and $P_{YZ}$ denote the bridges between
$X,Y$ and between $Y,Z$ (empty if the sets meet). Then
\[
  \interior{P}\subseteq Y\cup P_{XY}\cup P_{YZ}.
\]
\end{lemma}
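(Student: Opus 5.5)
Fix a point $b\in\interior{P}$; we must show $b\in Y\cup P_{XY}\cup P_{YZ}$. The plan is a case split on where $b$ sits relative to $Y$, driven by the separation property from the proof of Lemma~\ref{lem:bridge}: removing $b$ separates $T$, with $X$ and $Z$ in different components, since both are connected, avoid $b$, and every arc from $X$ to $Z$ passes through $b$. Write $C_X$ and $C_Z$ for the components of $T\setminus\{b\}$ containing $X$ and $Z$ respectively.

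If $b\in Y$ there is nothing to prove. Otherwise $Y$ is connected and avoids $b$, so $Y$ lies in a single component $C$ of $T\setminus\{b\}$. Since $C_X\neq C_Z$, at least one of them differs from $C$; by symmetry say $C\neq C_X$. Then $Y$ is disjoint from $X$ (different components), so $P_{XY}$ is a genuine bridge. Any arc from a point of $X$ to a point of $Y$ joins two different components of $T\setminus\{b\}$, so it must pass through $b$. By Lemma~\ref{lem:bridge}, the particular arc $P_{XY}$ is such an arc, hence $b\in P_{XY}$. If instead $C\neq C_Z$, the same argument gives $b\in P_{YZ}$.

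The only point needing care is the separation claim: that a point $b$ in the open bridge disconnects $X$ from $Z$, and that an arc between different components of $T\setminus\{b\}$ must hit $b$. Both facts are already established in the proof of Lemma~\ref{lem:bridge}, or follow immediately from the uniqueness of arcs in a tree. I expect no real obstacle beyond stating this step cleanly. The conclusion $\interior{P}\subseteq Y\cup P_{XY}\cup P_{YZ}$ then follows since $b$ was arbitrary.
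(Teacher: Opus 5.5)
Your proposal is correct and follows essentially the same argument as the paper: remove $b$, note that the connected set $Y$ lies in one component of $T\setminus\{b\}$, and conclude $b\in P_{XY}$ or $b\in P_{YZ}$ according to whether that component differs from the one containing $X$ or the one containing $Z$. Your observation that at least one of $C_X$, $C_Z$ must differ from $C$ is a slightly more careful version of the paper's two-sides phrasing, because $T\setminus\{b\}$ can have more than two components when $b$ is a vertex.
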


\begin{proof}
Let $b\in\interior{P}\setminus Y$. Removing $b$ separates $X$ from $Z$, and
the connected set $Y$ lies on one side. If $Y$ lies on the side not
containing $X$, then $Y\cap X=\emptyset$ and every arc from $X$ to $Y$
passes through $b$, so $b\in P_{XY}$. Otherwise $Y$ lies on the side of
$X$, away from $Z$, and likewise $b\in P_{YZ}$.
\end{proof}

\begin{theorem}\label{thm:metric}
$d$ is a metric on $\Sub$.
\end{theorem}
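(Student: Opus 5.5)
Nonnegativity and symmetry are immediate from the definition, since $\mu(X\sd Y)$ and $\dT(X,Y)$ are nonnegative and symmetric in $X,Y$. For definiteness, suppose $d(X,Y)=0$. Then $\dT(X,Y)=0$, so $X\cap Y\neq\emptyset$, and $\mu(X\sd Y)=0$. We must still show $X=Y$. Suppose $x\in X\setminus Y$. Since $Y$ is closed, some small neighbourhood of $x$ misses $Y$. Because $X$ is connected, contains $x$, and meets $Y$, the set $X$ contains a nondegenerate arc starting at $x$ inside that neighbourhood. This arc has positive length and lies in $X\setminus Y$, which contradicts $\mu(X\sd Y)=0$. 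Hence $X\subseteq Y$, and by symmetry $X=Y$.

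For the triangle inequality $d(X,Z)\le d(X,Y)+d(Y,Z)$ we split into two cases. If $X\cap Z\neq\emptyset$, then $d(X,Z)=\tfrac12\mu(X\sd Z)$. The inclusion $X\sd Z\subseteq (X\sd Y)\cup(Y\sd Z)$ and subadditivity of $\mu$ give the claim, because the bridge terms on the right-hand side are nonnegative.

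The main case is $X\cap Z=\emptyset$, with bridge $P$ of length $\delta$. Here $d(X,Z)=\tfrac12(\mu(X)+\mu(Z))+\delta$. It is convenient to use the join form $d(X,Y)=\mu(X\vee Y)-\tfrac12(\mu X+\mu Y)$, which follows from \eqref{eq:join-length} and $\mu(X\cup Y)=\tfrac12(\mu X+\mu Y+\mu(X\sd Y))$. In this form the triangle inequality becomes
\[
  \mu(X\vee Z)+\mu(Y)\le \mu(X\vee Y)+\mu(Y\vee Z).
\]
Since $\mu(X\vee Z)=\mu(X)+\mu(Z)+\delta$, the task is to bound $\mu(X)+\mu(Z)+\mu(Y)+\delta$ by the right-hand side. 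We proceed in three steps.

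\emph{Step 1.} The sets $X$, $Z$ and $\interior P$ are pairwise disjoint, and each of them is contained in $(X\vee Y)\cup(Y\vee Z)$; for $\interior P$ this follows from Lemma~\ref{lem:cover}. Hence
\[
  \mu(X)+\mu(Z)+\delta\le\mu\bigl((X\vee Y)\cup(Y\vee Z)\bigr).
\]

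\emph{Step 2.} Both joins contain $Y$, so $\mu\bigl((X\vee Y)\cap(Y\vee Z)\bigr)\ge\mu(Y)$.

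\emph{Step 3.} Inclusion–exclusion gives $\mu(A\cup B)+\mu(A\cap B)=\mu(A)+\mu(B)$ with $A=X\vee Y$ and $B=Y\vee Z$. Combining this with Steps 1 and 2 should yield the inequality.

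The one point requiring care is Step 2: I must check that adding $\mu(Y)$ to the left-hand side of Step 1 is really controlled by the intersection term. In fact it is: $\mu(X\cup Z\cup\interior P)+\mu(Y)\le\mu(A\cup B)+\mu(A\cap B)$ holds because the first term is at most $\mu(A\cup B)$ and the second at most $\mu(A\cap B)$. So the argument closes, and the real obstacle has already been handled by Lemma~\ref{lem:cover}.
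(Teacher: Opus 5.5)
Your proof is correct and essentially the paper's: the identity argument and the case $X\cap Z\neq\emptyset$ coincide, and in the disjoint case the decisive input is again Lemma~\ref{lem:cover} applied to $\interior{P}$. The only difference is bookkeeping. You go through the join form (Proposition~\ref{prop:hull-distance}, which you derive directly) and inclusion--exclusion on $X\vee Y$ and $Y\vee Z$; after subtracting $\mu(X)+\mu(Y)+\mu(Z)$ from both sides, your target inequality $\mu(X\vee Z)+\mu(Y)\le\mu(X\vee Y)+\mu(Y\vee Z)$ is exactly the paper's reduction~\eqref{eq:key}.
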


\begin{proof}
Nonnegativity and symmetry are clear.

\emph{Identity.} Let $d(X,Y)=0$. Then $X\cap Y\neq\emptyset$ and
$\mu(X\sd Y)=0$. Suppose $x\in X\setminus Y$. Since $Y$ is closed, some
ball $B$ around $x$ misses $Y$. As $X$ meets $Y$, $X\ne\{x\}$, so $X$
contains a nondegenerate arc starting at $x$, whose initial piece lies in
$B\cap X\subseteq X\setminus Y$ and has positive length, a contradiction.
Hence $X\subseteq Y$, and by symmetry $X=Y$.

\emph{Triangle inequality.} Let $X,Y,Z\in\Sub$. Since
$X\sd Z\subseteq(X\sd Y)\cup(Y\sd Z)$,
\begin{equation}\label{eq:sd-triangle}
  \mu(X\sd Z)\le\mu(X\sd Y)+\mu(Y\sd Z).
\end{equation}
If $X\cap Z\ne\emptyset$, then $\dT(X,Z)=0$ and \eqref{eq:sd-triangle}
gives the claim. Assume $X\cap Z=\emptyset$ and let $P$ be their bridge,
$c=\mu(P)$. Then $\mu(X\sd Z)=\mu(X)+\mu(Z)$, and expanding the symmetric
differences,
\[
  \tfrac12\bigl[\mu(X\sd Y)+\mu(Y\sd Z)\bigr]
  =\tfrac12\bigl[\mu(X)+\mu(Z)\bigr]+\mu\bigl(Y\setminus(X\cup Z)\bigr),
\]
where we used $X\cap Z=\emptyset$. The triangle inequality is therefore
equivalent to
\begin{equation}\label{eq:key}
  c\le\mu\bigl(Y\setminus(X\cup Z)\bigr)+\dT(X,Y)+\dT(Y,Z).
\end{equation}
By Lemma~\ref{lem:cover},
$c=\mu(\interior{P})\le\mu(Y\cap\interior{P})+\mu(P_{XY})+\mu(P_{YZ})$.
Since $\interior{P}$ misses $X\cup Z$, we have
$Y\cap\interior{P}\subseteq Y\setminus(X\cup Z)$, and
$\mu(P_{XY})=\dT(X,Y)$, $\mu(P_{YZ})=\dT(Y,Z)$. This is
\eqref{eq:key}.
\end{proof}

We now show that the factor $\tfrac12$ in Definition~\ref{def:d} cannot be
lowered.

\begin{proposition}\label{prop:half}
For $\lambda>0$ let $d_\lambda(X,Y)=\lambda\,\mu(X\sd Y)+\dT(X,Y)$. Then
$d_\lambda$ is a metric on $\Sub$ if and only if $\lambda\ge\frac12$.
\end{proposition}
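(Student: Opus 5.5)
We treat the two directions separately. For sufficiency, fix $\lambda\ge\frac12$ and write
\[
  d_\lambda(X,Y)=d(X,Y)+\bigl(\lambda-\tfrac12\bigr)\mu(X\sd Y).
\]
Theorem~\ref{thm:metric} says $d$ is a metric. The term $(\lambda-\frac12)\mu(X\sd Y)$ is a nonnegative multiple of $\mu(X\sd Y)$, which is symmetric and nonnegative, and it satisfies the triangle inequality by \eqref{eq:sd-triangle}. A sum of a metric and a pseudometric is a metric, since identity of indiscernibles is inherited from $d$. So $d_\lambda$ is a metric.

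For necessity, we take $0<\lambda<\frac12$ and exhibit three subtrees that violate the triangle inequality. Since $T$ has an edge, it contains a segment $[0,\ell]$ with $\ell>0$, and we work inside it with coordinates. Take $\varepsilon>0$ small and set
\[
  X=\{0\},\qquad Z=\{\varepsilon\},\qquad Y=[0,\varepsilon].
\]
Then $d_\lambda(X,Z)=\varepsilon$, because the symmetric difference of two points has measure $0$ and the gap between them is $\varepsilon$. On the other hand $d_\lambda(X,Y)=\lambda\varepsilon$, since $X\subseteq Y$ and $\mu(Y\setminus X)=\varepsilon$. By the same computation $d_\lambda(Y,Z)=\lambda\varepsilon$. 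The triangle inequality through $Y$ would require $\varepsilon\le 2\lambda\varepsilon$, that is $\lambda\ge\frac12$, which contradicts the choice of $\lambda$. So $d_\lambda$ is not a metric for $\lambda<\frac12$.

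No step is a real obstacle. The only thing to check is that points and closed segments inside one edge are subtrees in the sense of the paper, which is immediate from the definition (nonempty, closed, connected). The example also shows the bound is sharp: at $\lambda=\frac12$ the inequality holds with equality.
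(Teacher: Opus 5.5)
Your proposal is correct and follows the paper's proof: sufficiency uses the same decomposition $d_\lambda=d+(\lambda-\tfrac12)\,\mu(\cdot\sd\cdot)$ as a metric plus a pseudometric, and necessity uses the same point--segment--point counterexample. The only difference is that the paper takes a whole edge $[a,b]$ rather than a short subsegment $[0,\varepsilon]$, which does not matter.
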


\begin{proof}
If $\lambda\ge\frac12$, then
$d_\lambda=d+(\lambda-\frac12)\,\mu(\cdot\sd\cdot)$ is the sum of a metric
and a pseudometric, hence a metric. If $\lambda<\frac12$, take an edge
$[a,b]$ of length $c>0$ and put $X=\{a\}$, $Y=[a,b]$, $Z=\{b\}$. Then
$d_\lambda(X,Z)=c>2\lambda c=d_\lambda(X,Y)+d_\lambda(Y,Z)$.
\end{proof}

For $\lambda=\frac12$ the example gives equality: moving a point by first
growing it into a segment and then shrinking the segment to its other end
costs exactly the distance travelled. This is the mechanism studied in
Section~\ref{sec:crawl}.

\begin{proposition}\label{prop:embed}
The map $x\mapsto\{x\}$ is an isometric embedding of $(T,\dT)$ into
$(\Sub,d)$.
\end{proposition}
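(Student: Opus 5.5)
The plan is to compute $d(\{x\},\{y\})$ directly from Definition~\ref{def:d} and then check injectivity. First I need that single points are subtrees. A singleton $\{x\}$ is nonempty, closed and connected, so it lies in $\Sub$ and the map is well defined.

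Fix $x,y\in T$ and split into two cases.

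\emph{Case $x=y$.} Here $\{x\}\sd\{y\}=\emptyset$ and $\dT(\{x\},\{y\})=0$, so $d=0=\dT(x,y)$.

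\emph{Case $x\neq y$.} Here $\{x\}\sd\{y\}=\{x,y\}$ is a finite set, and so $\mu(\{x\}\sd\{y\})=0$. The set-to-set distance $\dT(\{x\},\{y\})$ is a minimum over the single pair $(x,y)$, so it equals $\dT(x,y)$. Equivalently, Lemma~\ref{lem:bridge} applies: the two points are disjoint subtrees, and their bridge is the arc $[x,y]$, of length $\dT(x,y)$. Either way,
\[
  d(\{x\},\{y\})=\tfrac12\cdot 0+\dT(x,y)=\dT(x,y).
\]

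Isometry then gives injectivity for free, since $\{x\}=\{y\}$ forces $\dT(x,y)=0$, hence $x=y$. Because $d$ is a metric by Theorem~\ref{thm:metric}, this makes $x\mapsto\{x\}$ an isometric embedding of $(T,\dT)$ into $(\Sub,d)$.

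There is no real obstacle here. The only point to state explicitly is that $\mu$ assigns zero length to finite sets, because $\mu$ is the one-dimensional length measure on the edges.
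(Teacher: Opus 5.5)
Your proof is correct and follows the paper's argument: both note that $\mu(\{x\}\sd\{y\})=0$ and that the set distance between singletons reduces to $\dT(x,y)$, with injectivity following from the isometry. Your case split and your remark that $\mu$ vanishes on finite sets only make explicit what the paper leaves implicit.
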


\begin{proof}
For $x,y\in T$, $\mu(\{x\}\sd\{y\})=0$, so
$d(\{x\},\{y\})=\dT(\{x\},\{y\})=\dT(x,y)$. Injectivity follows.
\end{proof}

Thus $(\Sub,d)$ contains the tree itself as the subspace of single points.
The symmetric difference metric $\mu(X\sd Y)$ does not have this property since 
it vanishes on all pairs of points.

\section{Crawling interpretation}\label{sec:crawl}

Section~\ref{sec:metric} defined $d$ by a formula. This section shows that
the formula is forced: $d$ is the least cost of moving a subtree through
overlapping positions, without jumps.

\begin{definition}[Crawl]\label{def:crawl}
A \emph{crawl} from $X$ to $Y$ is a finite sequence
$\gamma:\ X=Z_0,Z_1,\dots,Z_k=Y$ of subtrees in which consecutive subtrees
overlap, $Z_{i-1}\cap Z_i\ne\emptyset$. Its cost is
\[
  \cost(\gamma)=\sum_{i=1}^k d(Z_{i-1},Z_i)
  =\tfrac12\sum_{i=1}^k\mu(Z_{i-1}\sd Z_i).
\]
\end{definition}

The overlap condition forbids jumps: each step keeps part of the current
subtree and pays only for what is added and removed. Consecutive subtrees
need not be nested; a step may add on one side and remove on another. A
segment of length $L$ shifted by $\varepsilon\le L$ adds $\varepsilon$ at
its front and removes $\varepsilon$ at its back, for a cost $\varepsilon$.

The following representation expresses $d$ through the length of the join
$X\vee Y$.

\begin{proposition}\label{prop:hull-distance}
For all $X,Y\in\Sub$,
\[
  d(X,Y)=\mu(X\vee Y)-\tfrac12\bigl(\mu(X)+\mu(Y)\bigr).
\]
\end{proposition}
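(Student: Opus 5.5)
The plan is a direct computation from Definition~\ref{def:d} and the join-length identity \eqref{eq:join-length}; no case split on whether $X$ and $Y$ meet is needed, because \eqref{eq:join-length} already covers both cases (with $\dT(X,Y)=0$ and $X\vee Y=X\cup Y$ when they meet).

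First I will express $\mu(X\sd Y)$ through the union, using additivity of $\mu$ on the measurable (indeed closed) sets involved. Since $X\cup Y$ is the disjoint union of $X\sd Y$ and $X\cap Y$, and $\mu(X)+\mu(Y)=\mu(X\cup Y)+\mu(X\cap Y)$ by inclusion--exclusion, we get
\[
  \mu(X\sd Y)=\mu(X\cup Y)-\mu(X\cap Y)=2\mu(X\cup Y)-\mu(X)-\mu(Y).
\]
Halving this gives $\tfrac12\mu(X\sd Y)=\mu(X\cup Y)-\tfrac12\bigl(\mu(X)+\mu(Y)\bigr)$.

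Then I add $\dT(X,Y)$ to both sides and invoke \eqref{eq:join-length}, $\mu(X\cup Y)+\dT(X,Y)=\mu(X\vee Y)$. The left side becomes $d(X,Y)$ and the right side becomes $\mu(X\vee Y)-\tfrac12(\mu(X)+\mu(Y))$, which is the claim.

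The only point needing care is finiteness: all lengths are finite because $T$ is a finite tree, so the subtractions in inclusion--exclusion are legitimate. The identity \eqref{eq:join-length} itself relies on the bridge $P$ meeting $X\cup Y$ only in its two endpoints (Lemma~\ref{lem:bridge}), a set of measure zero, and this is already established in the paper. I expect no real obstacle; the step most worth stating explicitly is the inclusion--exclusion rewrite of the symmetric difference.
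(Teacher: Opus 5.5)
Your proposal is correct and is essentially the paper's proof: both rewrite $\tfrac12\mu(X\sd Y)$ as $\mu(X\cup Y)-\tfrac12\bigl(\mu(X)+\mu(Y)\bigr)$ and then add $\dT(X,Y)$ using \eqref{eq:join-length}. Your explicit inclusion--exclusion step and the finiteness remark only spell out what the paper leaves implicit.
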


\begin{proof}
By \eqref{eq:join-length} and
$\mu(X\cup Y)-\frac12(\mu(X)+\mu(Y))=\frac12\mu(X\sd Y)$,
\[
  \mu(X\vee Y)-\tfrac12\bigl(\mu(X)+\mu(Y)\bigr)
  =\tfrac12\mu(X\sd Y)+\dT(X,Y)=d(X,Y).\qedhere
\]
\end{proof}

This representation immediately yields an optimal crawl: first grow to the
join, then shrink to the target.

\begin{proposition}[Inflate, then deflate]\label{prop:crawl1}
For all $X,Y\in\Sub$, the crawl $\gamma:\ X\to X\vee Y\to Y$ satisfies
$\cost(\gamma)=d(X,Y)$.
\end{proposition}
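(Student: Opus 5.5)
The plan is to check first that $\gamma$ is a legitimate crawl in the sense of Definition~\ref{def:crawl}, and then to compute its cost by exploiting the nesting $X\subseteq X\vee Y\supseteq Y$. Since $X\vee Y=X\cup P\cup Y$ is a subtree (as noted after Lemma~\ref{lem:bridge}) and contains both $X$ and $Y$, which are nonempty, the overlap conditions $X\cap(X\vee Y)=X\neq\emptyset$ and $(X\vee Y)\cap Y=Y\neq\emptyset$ hold. So $\gamma$ is a crawl with two steps. (If $X\vee Y$ happens to equal $X$ or $Y$, one step has cost zero, and nothing changes.)

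For the cost of each step, we use that nested subtrees meet, so $\dT$ vanishes between them and only the symmetric difference term remains. When $A\subseteq B$ we have $A\sd B=B\setminus A$, so $\mu(A\sd B)=\mu(B)-\mu(A)$; all lengths are finite because $T$ is a finite tree. Hence
\[
  d(X,X\vee Y)=\tfrac12\bigl(\mu(X\vee Y)-\mu(X)\bigr),\qquad
  d(X\vee Y,Y)=\tfrac12\bigl(\mu(X\vee Y)-\mu(Y)\bigr).
\]
Adding the two gives
\[
  \cost(\gamma)=\mu(X\vee Y)-\tfrac12\bigl(\mu(X)+\mu(Y)\bigr),
\]
and by Proposition~\ref{prop:hull-distance} this equals $d(X,Y)$.

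There is no serious obstacle here. The only points that need care are that $X\vee Y$ really is a subtree, which the paper has already established, and that $\mu$ is additive on the nested pair, which holds because $\mu$ is a measure with finite values.
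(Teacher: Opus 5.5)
Your proposal is correct and follows essentially the same route as the paper: nestedness gives the overlaps, each step costs half the length gained or lost, and Proposition~\ref{prop:hull-distance} identifies the sum with $d(X,Y)$. Your extra remarks on finiteness of $\mu$ and on the degenerate case $X\vee Y\in\{X,Y\}$ are harmless additions.
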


\begin{proof}
Both steps are nested, hence overlapping. By
Proposition~\ref{prop:hull-distance}, the cost is
\[
  \tfrac12\mu\bigl((X\vee Y)\setminus X\bigr)
  +\tfrac12\mu\bigl((X\vee Y)\setminus Y\bigr)
  =\mu(X\vee Y)-\tfrac12\bigl(\mu(X)+\mu(Y)\bigr)=d(X,Y).\qedhere
\]
\end{proof}

\begin{proposition}[Deflate, then inflate]\label{prop:crawl2}
If $X\cap Y\ne\emptyset$, the crawl $\gamma:\ X\to X\cap Y\to Y$ satisfies
$\cost(\gamma)=d(X,Y)$.
\end{proposition}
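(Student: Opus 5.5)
First I check that $\gamma$ is a legitimate crawl, and then I compute its cost directly. Because $X\cap Y\neq\emptyset$, the set $X\cap Y$ is nonempty. It is closed as an intersection of closed sets. It is also connected: in a tree the intersection of two connected subsets is connected, since for $u,v\in X\cap Y$ the unique arc $[u,v]$ lies in both $X$ and $Y$, as each is connected and arcs in $T$ are unique. So $X\cap Y\in\Sub$. Both steps $X\to X\cap Y$ and $X\cap Y\to Y$ are nested, and the smaller set is nonempty, so consecutive terms overlap. Hence $\gamma$ is a crawl.

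For the cost, I use the second expression in Definition~\ref{def:crawl}. In each step the smaller set is contained in the larger, so
\[
  \cost(\gamma)=\tfrac12\,\mu\bigl(X\setminus(X\cap Y)\bigr)+\tfrac12\,\mu\bigl(Y\setminus(X\cap Y)\bigr)
  =\tfrac12\bigl(\mu(X\setminus Y)+\mu(Y\setminus X)\bigr)=\tfrac12\,\mu(X\sd Y).
\]
Since $X\cap Y\neq\emptyset$, we have $\dT(X,Y)=0$. Definition~\ref{def:d} then gives $d(X,Y)=\tfrac12\mu(X\sd Y)$, which equals $\cost(\gamma)$.

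The only step that needs any care is showing that $X\cap Y$ is a subtree, and within that, its connectedness; this is where the uniqueness of arcs in $T$ is used. Everything else is bookkeeping with disjoint unions, the same as in Proposition~\ref{prop:crawl1}.
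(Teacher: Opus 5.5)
Your proof is correct and follows essentially the same route as the paper: you check that $X\cap Y$ is a subtree, so both nested steps overlap, and then compute the cost as $\tfrac12\mu(X\setminus Y)+\tfrac12\mu(Y\setminus X)=\tfrac12\mu(X\sd Y)=d(X,Y)$, using $\dT(X,Y)=0$. The only difference is that you spell out why $X\cap Y$ is connected (a connected subset of a tree contains the arc between any two of its points), which the paper states without comment.
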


\begin{proof}
$X\cap Y$ is a subtree, and both steps are nested. The cost is
$\tfrac12\mu(X\setminus Y)+\tfrac12\mu(Y\setminus X)=\tfrac12\mu(X\sd Y)
=d(X,Y)$.
\end{proof}

The first crawl passes through $X\vee Y$, whose length may reach
$\mu(X)+\mu(Y)+\delta$; the second passes through $X\cap Y$, which may be a
single point. Both extremes are optimal.

\begin{theorem}\label{thm:rho}
For all $X,Y\in\Sub$,
\[
  d(X,Y)=\min_\gamma\cost(\gamma),
\]
the minimum over all crawls $\gamma$ from $X$ to $Y$.
\end{theorem}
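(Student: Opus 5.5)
The equality splits into two inequalities. The upper bound, that the minimum is at most $d(X,Y)$ and is attained, is Proposition~\ref{prop:crawl1}: the crawl $X\to X\vee Y\to Y$ is admissible, since both steps are nested and so overlap, and its cost is exactly $d(X,Y)$.

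It therefore remains to show $\cost(\gamma)\ge d(X,Y)$ for every crawl $\gamma: X=Z_0,\dots,Z_k=Y$. We use the triangle inequality of Theorem~\ref{thm:metric}. Because consecutive subtrees overlap, $\dT(Z_{i-1},Z_i)=0$, and hence
\[
  d(Z_{i-1},Z_i)=\tfrac12\mu(Z_{i-1}\sd Z_i),
\]
which is the second expression in Definition~\ref{def:crawl}. Applying the triangle inequality $k-1$ times along the chain gives
\[
  d(X,Y)\le\sum_{i=1}^k d(Z_{i-1},Z_i)=\cost(\gamma).
\]
Combined with the attained upper bound, this shows the infimum over crawls is a minimum and equals $d(X,Y)$.

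The proof has no real obstacle: all the difficulty is already in the triangle inequality, whose disjoint case was handled through Lemma~\ref{lem:cover}. The one point to check is the bookkeeping in the cost formula. The equality $d(Z_{i-1},Z_i)=\tfrac12\mu(Z_{i-1}\sd Z_i)$ holds exactly because of the overlap condition, so the cost of a crawl is the $d$-length of the chain. Without this condition a ``jump'' step would still be priced by $d$, and the bound would be unaffected.
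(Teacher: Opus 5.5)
Your proposal is correct and matches the paper's proof: the lower bound $\cost(\gamma)\ge d(X,Y)$ comes from chaining the triangle inequality of Theorem~\ref{thm:metric} along the crawl, and Proposition~\ref{prop:crawl1} supplies a crawl attaining it. Your note that the overlap condition makes $d(Z_{i-1},Z_i)=\tfrac12\mu(Z_{i-1}\sd Z_i)$ is just the consistency of the two expressions in Definition~\ref{def:crawl}, and it does not change the argument.
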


\begin{proof}
For any crawl, the triangle inequality of Theorem~\ref{thm:metric} gives
$\cost(\gamma)=\sum_i d(Z_{i-1},Z_i)\ge d(X,Y)$.
Proposition~\ref{prop:crawl1} attains this bound.
\end{proof}

\section{Geodesics among subtrees of fixed length}\label{sec:geodesic}

For $L>0$ let
\[
  \SubL=\{X\in\Sub:\ \mu(X)=L\},
\]
which is nonempty when $L\le\mu(T)$. On $\SubL$,
$\mu(X\setminus Y)=\mu(Y\setminus X)$, so
$d(X,Y)=\mu(X\setminus Y)+\dT(X,Y)$. The crawls of
Section~\ref{sec:crawl} need not remain in $\SubL$. Here growth and
shrinkage occur simultaneously, which preserves length exactly and gives a
constant-speed geodesic inside $\SubL$.

Let $X,Y\in\SubL$ and put
\[
  m=\mu(X\vee Y)-L=d(X,Y),
\]
by Proposition~\ref{prop:hull-distance}. If $m=0$ then $X=Y$, so assume
$m>0$. For $r,s\ge0$ consider
\[
  G(r)=\{x\in X\vee Y:\ \dT(x,X)\le r\},\qquad
  S(s)=\{x\in X\vee Y:\ \dT(x,Y)\le s\}.
\]
Both are subtrees, since every point can be joined to $X$ (resp.\ $Y$) by
an arc inside the set. Their lengths are continuous and monotone in the
radius, from $L$ to $\mu(X\vee Y)=L+m$. For $a\in[0,m]$ choose radii
$r(a)$ nondecreasing and $s(a)$ nonincreasing, with $r(0)=0$ and $s(m)=0$,
such that
\[
  \mu\bigl(G(r(a))\bigr)=L+a,\qquad \mu\bigl(S(s(a))\bigr)=L+m-a,
\]
and write
\[
  G_a=G(r(a)),\qquad S_a=S(s(a)),\qquad R_a=(X\vee Y)\setminus S_a,\qquad
  F_a=G_a\cap S_a=G_a\setminus R_a.
\]
Thus $G_a$ is what has grown from $X$ and $R_a$ is what has been removed
towards $Y$, with $\mu(G_a\setminus X)=\mu(R_a)=a$. The continuous crawl
from $X$ to $Y$ is
\[
  H(t)=F_{mt}\qquad(0\le t\le1).
\]

\begin{theorem}\label{thm:geodesic}
$H$ maps $[0,1]$ into $\SubL$, $H(0)=X$, $H(1)=Y$, and
\[
  d(H(s),H(t))=|t-s|\,d(X,Y)\qquad(s,t\in[0,1]).
\]
\end{theorem}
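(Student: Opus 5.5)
The plan is to reduce everything to a single local estimate, $d(F_a,F_b)\le b-a$ for $0\le a\le b\le m$, and then let the triangle inequality of Theorem~\ref{thm:metric} force equality. First I check that $H$ is well defined. $G_a$ and $S_a$ are subtrees. Two subtrees of a tree that meet have a connected intersection: an arc between two common points lies in both. So once $F_a\neq\emptyset$, $F_a$ is a subtree. The endpoint values follow from $r(0)=0$ and $s(m)=0$. These give $G_0=X$ and $S_0=X\vee Y$, so $F_0=X$. Likewise $S_m=Y$ and $G_m=X\vee Y$, so $F_m=Y$. Hence $H(0)=X$ and $H(1)=Y$.

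For the length, I use inclusion--exclusion:
\[
  \mu(F_a)=\mu(G_a)+\mu(S_a)-\mu(G_a\cup S_a)=2L+m-\mu(G_a\cup S_a).
\]
So $\mu(F_a)=L$, and nonemptiness, follow once I show the covering property $G_a\cup S_a=X\vee Y$, equivalently $R_a\subseteq G_a$.

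This covering property is the step I expect to be the main obstacle. Write $X\vee Y=X\cup P\cup Y$. Points of $X$ lie in $G_a$ and points of $Y$ lie in $S_a$, so only points of $X\vee Y$ outside $X\cup Y$ matter. When $X\cap Y\ne\emptyset$ there are no such points and the covering is automatic. When the subtrees are disjoint, these are the points of the bridge $\interior{P}$. A point $z$ on $P$ satisfies $\dT(z,X)+\dT(z,Y)=\delta$ by Lemma~\ref{lem:bridge}. So the covering amounts to $r(a)+s(a)\ge\delta$.

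I would prove this by contradiction. Suppose $r+s<\delta$. Every point of $X\vee Y\setminus X$ lies in $P$, or in a component of $Y\setminus X$ hanging off $P$ or off $X$. I would then compare the two measures
\[
  a=\mu(G_a\setminus X),\qquad a=\mu(R_a)=\mu\{z\in X\vee Y:\ \dT(z,Y)>s\}.
\]
The set $R_a$ contains all of $X\setminus Y$ together with the part of $P$ at distance more than $s$ from $Y$. This already has measure at least $\mu(X\setminus Y)+\delta-s$. Meanwhile $G_a\setminus X$ consists of the part of $P$ within $r$ of $X$, plus pieces of $Y\setminus X$. Those pieces have total measure at most $\mu(Y\setminus X)=\mu(X\setminus Y)$, since on $\SubL$ these two measures agree. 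This gives $\mu(G_a\setminus X)\le r+\mu(X\setminus Y)<\delta-s+\mu(X\setminus Y)\le\mu(R_a)$, contradicting $\mu(G_a\setminus X)=\mu(R_a)$. If this bookkeeping turns out to be delicate, I would fall back on a level-set argument along the function $z\mapsto\dT(z,X)$ on the bridge.

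With $H(t)\in\SubL$ established, I turn to the distance. For $a\le b$ we have $G_a\subseteq G_b$ and $R_a\subseteq R_b$, so
\[
  F_b\setminus F_a\subseteq (G_b\setminus G_a)\cup(R_a\setminus R_b)=G_b\setminus G_a,
\]
because $R_a\setminus R_b$ is empty. This gives $\mu(F_b\setminus F_a)\le b-a$. When $F_a\cap F_b\neq\emptyset$, this yields $d(F_a,F_b)=\mu(F_b\setminus F_a)\le b-a$, using the formula for $d$ on $\SubL$. For general $a<b$, I subdivide $[a,b]$ finely enough that consecutive sets overlap. This is possible because the sets vary continuously in length, and $F_a$ and $F_{a'}$ share points when $|a-a'|$ is small relative to $L>0$. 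Summing the local bounds then gives $d(F_a,F_b)\le b-a$ in all cases.

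Finally, I take $a=ms$ and $b=mt$ with $s\le t$ and apply the triangle inequality:
\[
  m=d(X,Y)\le d(F_0,F_a)+d(F_a,F_b)+d(F_b,F_m)\le a+(b-a)+(m-b)=m.
\]
So every inequality in the chain is an equality. In particular $d(H(s),H(t))=m(t-s)=|t-s|\,d(X,Y)$.
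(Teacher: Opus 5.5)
Your proposal is correct and follows essentially the same route as the paper. You prove the covering property $R_a\subseteq G_a$ by a measure-counting contradiction on the bridge, deduce $\mu(F_a)=L$, establish the local bound $d(F_a,F_b)\le b-a$, extend it by subdivision, and force equality with the triangle inequality. The one soft spot is your stated reason why nearby $F_a$ overlap ("vary continuously in length" is vacuous, since all $F_a$ have length $L$); the clean justification, as in the paper, is that $\mu(F_b\setminus F_a)\le b-a<L$ gives $\mu(F_a\cap F_b)\ge L-(b-a)>0$.
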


\begin{proof}
\emph{Nothing is removed before it is reached: $R_a\subseteq G_a$.} Let
$x\in R_a$. Then $x\notin Y$; if $x\in X$ we are done. Otherwise $X,Y$ are
disjoint and $x$ lies inside their bridge $[p,q]$, at distance $u$ from
$p$. Every point of $[p,x]$ is at least as far from $Y$ as $x$, so
$[p,x]\subseteq R_a$ and $a\ge u$. If $x\notin G_a$, then $r(a)<u<\delta$,
and growth from $X$ within $X\vee Y$ by a radius below $\delta$ covers only
the initial segment of the bridge of length $r(a)$; thus $a=r(a)<u$, a
contradiction.

\emph{Length and endpoints.} Hence $\mu(F_a)=\mu(G_a)-\mu(R_a)=L$. As
$L>0$, $F_a$ is a nonempty intersection of two subtrees, hence a subtree,
so $F_a\in\SubL$. Moreover $F_0=X\cap(X\vee Y)=X$ and
$F_m=(X\vee Y)\cap Y=Y$.

\emph{Lipschitz bound.} For $a<b$, nesting gives
$F_a\sd F_b\subseteq(G_b\setminus G_a)\cup(R_b\setminus R_a)$, so
$\mu(F_a\sd F_b)\le2(b-a)$. If $b-a<L$, then $F_a$ and $F_b$ meet, since
disjoint members of $\SubL$ have symmetric difference $2L$; hence
$d(F_a,F_b)=\frac12\mu(F_a\sd F_b)\le b-a$. Splitting $[a,b]$ into pieces
shorter than $L$ and using the triangle inequality gives
$d(F_a,F_b)\le b-a$ for all $a<b$.

\emph{Equality.} For $0\le s<t\le1$,
\[
  m=d(X,Y)\le d(X,H(s))+d(H(s),H(t))+d(H(t),Y).
\]
Thus, 
\[
  m
  \le ms+m(t-s)+m(1-t)=m,
\]
so every inequality is an equality and $d(H(s),H(t))=m(t-s)$.
\end{proof}

\begin{corollary}\label{cor:geodesic}
For every $0<L\le\mu(T)$, the metric space $(\SubL,d)$ is geodesic.
\end{corollary}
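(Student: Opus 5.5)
The corollary should follow directly from Theorem~\ref{thm:geodesic}; what remains is to spell out what ``geodesic space'' requires and to handle the degenerate cases. A metric space is geodesic if every pair of points $X,Y$ is joined by a map $\gamma:[0,\ell]\to\SubL$ with $\gamma(0)=X$, $\gamma(\ell)=Y$ and $d(\gamma(u),\gamma(v))=|u-v|$, where $\ell=d(X,Y)$. I will first note that $\SubL$ is nonempty because $0<L\le\mu(T)$. For instance, take a path in $T$ or grow a ball around a point until its length reaches $L$, using continuity of length in the radius as in Section~\ref{sec:geodesic}. So the statement is not vacuous, although nonemptiness is not strictly needed for the geodesic property.

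Next I fix $X,Y\in\SubL$. If $X=Y$, the constant path of length $0$ works. Otherwise $m=d(X,Y)>0$ by Theorem~\ref{thm:metric}. In that case I take the path $H$ constructed before Theorem~\ref{thm:geodesic} and reparametrize it by arc length, setting $\gamma(u)=H(u/m)$ for $u\in[0,m]$; equivalently $\gamma(u)=F_u$. Theorem~\ref{thm:geodesic} then gives three facts: $\gamma$ takes values in $\SubL$; its endpoints are $\gamma(0)=X$ and $\gamma(m)=Y$; and
\[
  d(\gamma(u),\gamma(v))=\Bigl|\tfrac{v}{m}-\tfrac{u}{m}\Bigr|\,m=|u-v|.
\]
Hence $\gamma$ is an isometric embedding of $[0,m]$ into $\SubL$ joining $X$ to $Y$, which is exactly a geodesic. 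Continuity of $\gamma$ needs no separate argument, since it is implied by the isometry property.

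The only point I expect to need care is checking that the construction preceding Theorem~\ref{thm:geodesic} is well defined for every pair in $\SubL$, and in particular that the radii $r(a)$ and $s(a)$ exist. That existence is asserted there from continuity and monotonicity of $\mu(G(r))$ and $\mu(S(s))$, and I will take it as given from the text. Apart from that, the corollary is just a restatement of Theorem~\ref{thm:geodesic} together with the trivial case $X=Y$.
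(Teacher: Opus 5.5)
Your proposal is correct and follows the paper. The paper states the corollary as an immediate consequence of Theorem~\ref{thm:geodesic}, and your arc-length reparametrization $\gamma(u)=H(u/m)=F_u$ on $[0,m]$, together with the trivial case $X=Y$, is exactly the step the paper leaves implicit. Taking the radii from the text is also legitimate: for instance $r(a)=\min\{r\ge0:\mu(G(r))=L+a\}$ and $s(a)=\min\{s\ge0:\mu(S(s))=L+m-a\}$ exist, and have the required monotonicity, because $\mu(G(\cdot))$ and $\mu(S(\cdot))$ are continuous and nondecreasing.
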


Unlike the crawls of Section~\ref{sec:crawl}, which separate growth from
shrinkage, the continuous crawl $H$ adds at the front exactly what it
removes at the back. Nearby positions $H(s)$ and $H(t)$ overlap but are not
nested, which is why Definition~\ref{def:crawl} requires overlap rather
than inclusion.

\section{Conclusion}\label{sec:conclusion}

We introduced a distance between subtrees of a tree that adds the gap
between two subtrees to half of their symmetric difference. It is a metric
on all subtrees, the factor one half is the smallest possible, and the tree
embeds in the resulting space as its single points. The metric has a
mechanical meaning: it is the least cost of crawling from one subtree to
another through overlapping positions. This least cost is reached both by
inflating then deflating and, for overlapping subtrees, by deflating then
inflating. When growth and shrinkage are synchronized, a subtree moves
without changing its length, so the subtrees of a fixed length form a
geodesic space.

The metric provides a geometric framework for subtree location problems.
Two directions merit further study. The first is to characterize the
geodesics between two subtrees and determine how to select among
different orders of growth and shrinkage. The second is to investigate
whether this metric and geodesic structure can explain existing
efficient algorithms for subtree median and center problems and
guide the development of new ones.

\bibliographystyle{plain}
\bibliography{main}

\begin{thebibliography}{1}

\bibitem{HakimiSchmeichelLabbe1993}
S.~Louis Hakimi, Edward~F. Schmeichel, and Martine Labb{\'e}.
\newblock On locating path- or tree-shaped facilities on networks.
\newblock {\em Networks}, 23(6):543--555, 1993.

\bibitem{RobustMedianSubtree}
Kien~Trung Nguyen, Huong Nguyen-Thu, Huy~Minh Le, and Tran~Thanh Hiep.
\newblock Combinatorial algorithms for the minmax robust median subtree problem
  on tree networks with interval vertex weights.
\newblock {\em Asian-European Journal of Mathematics}, 18(02):2450107, 2025.

\bibitem{InverseCenterSubtree}
Kien~Trung Nguyen, Huong Nguyen-Thu, and Nguyen~Thanh Luan.
\newblock The inverse center subtree problem on tree graphs.
\newblock {\em Discrete Mathematics, Algorithms and Applications},
  18(02):2550030, 2026.

\bibitem{PuertoTamir2005}
Justo Puerto and Arie Tamir.
\newblock Locating tree-shaped facilities using the ordered median objective.
\newblock {\em Mathematical Programming}, 102(2):313--338, 2005.

\bibitem{ramana1985valuation}
PV~Ramana~Murty and Teresa Engelbert.
\newblock On valuation in semilattices.
\newblock {\em Math{\'e}matiques et sciences humaines}, 90:19--44, 1985.

\bibitem{ShiouraUno1997}
Akiyoshi Shioura and Takeaki Uno.
\newblock A linear time algorithm for finding a $k$-tree core.
\newblock {\em Journal of Algorithms}, 23(2):281--290, 1997.

\bibitem{TamirPuertoPerezBrito2002}
Arie Tamir, Justo Puerto, and Dionisio P{\'e}rez-Brito.
\newblock The centdian subtree on tree networks.
\newblock {\em Discrete Applied Mathematics}, 118(3):263--278, 2002.

\end{thebibliography}

\end{document}